\title{Balanced Non-Transitive Dice II: Tournaments}
\author{Alex Schaefer\footnote{Binghamton University; aschaef3@binghamton.edu}}
\newtheorem{theorem}{Theorem}[section]
\newtheorem{definition}[theorem]{Definition}
\newtheorem{question}[theorem]{Question}
\newtheorem{example}[theorem]{Example}
\newcommand{\ssection}[1]{%
  \section[#1]{\centering\normalfont\scshape #1}}
\begin{document} 

\maketitle

\begin{abstract}
We further study sets of labeled dice in which the relation ``is a better die than'' is non-transitive.  Focusing on sets with an additional symmetry we call ``balance,'' we prove that sets of $n$ such $m$-sided dice exist for all $n,m \geq 3$. We then show how to construct a set of $n$ dice such that the relation behaves according to the direction of the arrows of \emph{any} tournament (complete directed graph) on $n$ vertices.
\end{abstract}

\ssection{Introduction}

Consider the following game: choose a die in Figure \ref{dice}, and then I choose a different die (based on your choice).  We roll our dice, and the player whose die shows a higher number wins.  \\

In the long run, I will have an advantage in this game:  Whichever die you choose, I will choose the one immediately to its left (and I will choose die C if you choose die A).  In any case, the probability of my die beating yours is $19/36 > 1/2$.  \\

\begin{figure}[htp]
\centering
\includegraphics[height = 1.05in]{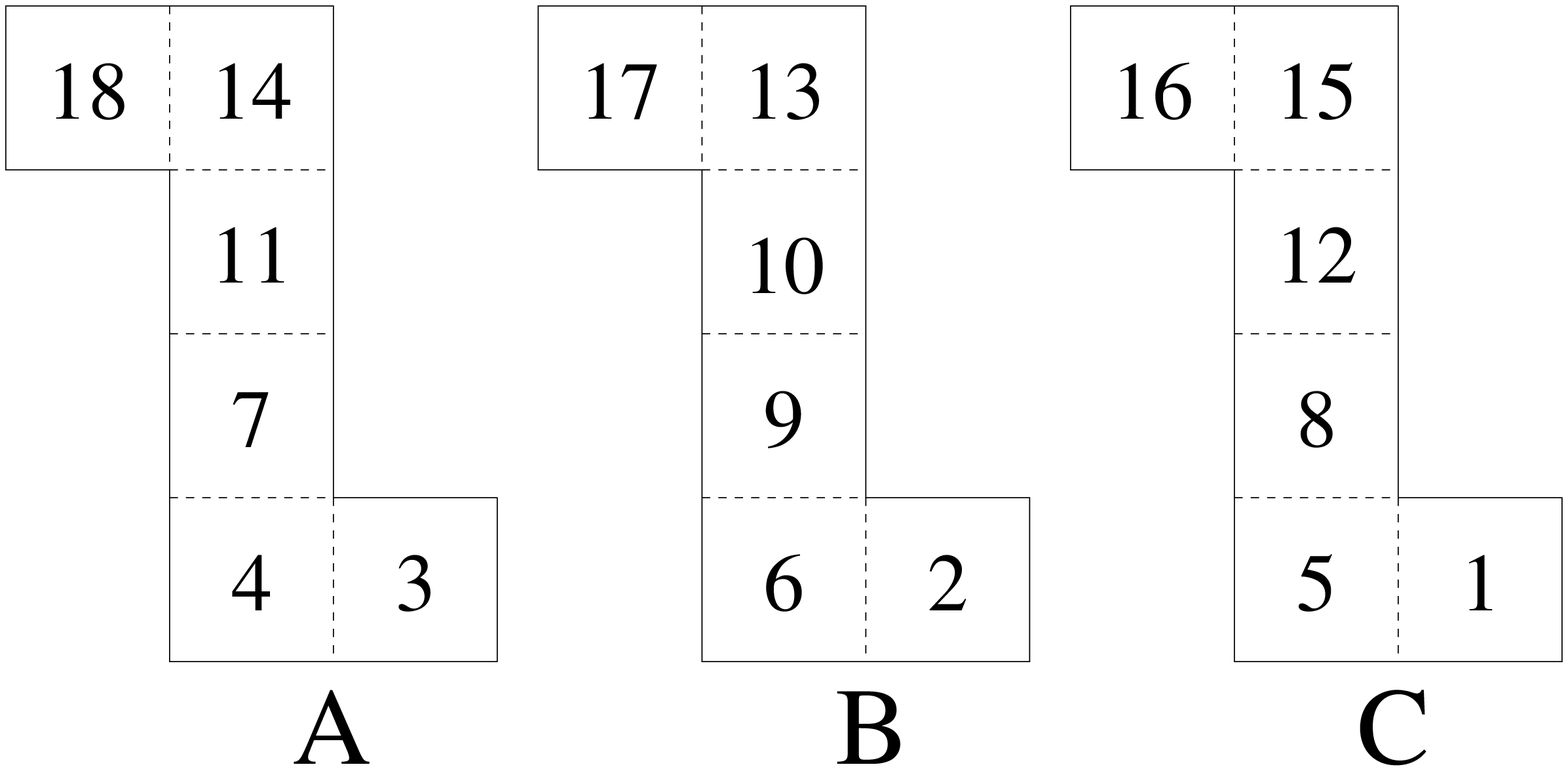}
\caption{A set of balanced non-transitive 6-sided dice.}\label{dice}
\end{figure}

This is a case of the phenomenon of \emph{non-transitive} dice, first introduced by Martin Gardner in \cite{MGntD}, and further explored in \cite{MGnt}, \cite{ntD1}, and \cite{FFF}.  \\

Fix integers $n, m \geq 3$.  For our purposes, a set of $n$ $m$-sided \emph{dice} is a collection of pairwise-disjoint sets $A_{1}, A_{2},\dots, A_{n}$ with $|A_{i}| = m$ and $\cup A_{i} = [n\cdot m]$ (here and throughout, $[k] = \{1, 2, \dots, k\}$).  We think of die $A_{i}$ as being labeled with the elements of $A_{i}$.  Each die is fair, in that the probability of rolling any one of its numbers is $1/m$.  We also write $P(A\succ B)$ for the probability that, upon rolling both $A$ and $B$, the number rolled on $A$ exceeds that on $B$, and $A\succ B$ if this probability exceeds $\frac{1}{2}$.

\begin{definition}
A set of dice is \emph{non-transitive} if $A_{i} \succ A_{i+1}$ for all $i$ (and $A_{n} \succ A_{1}$).  That is, the relation ``is a better die than'' is non-transitive.
\end{definition}

In this paper we (mostly) examine non-transitive sets of dice, but we introduce a new property as well.

\begin{definition}
A set of dice is \emph{balanced} if $P(A_{i} \succ A_{i+1}) = P(A_{j} \succ A_{j+1}) = P(A_{n} \succ A_{1})$ for all $i$ and $j$. This value is called the \emph{victorious probability} of the set.  
\end{definition}

Note that the set of dice in Figure \ref{dice} is balanced, as $P(A \succ B) = P(B \succ C) = P(C \succ A) = 19/36$.  \\

\begin{definition}
A \emph{graph} is an ordered pair $(V,E)$ where $V$ is a set of \emph{vertices} and $E$ is a set of unordered pairs of vertices called \emph{edges}. A \emph{directed graph} is a graph where the edges are ordered pairs.
\end{definition}

We can then also think of a set of dice as the set of vertices of a graph, and having $P(A\succ B)>\frac{1}{2}$ may correspond to a directed edge from $A$ to $B$.

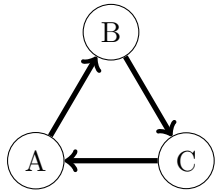
\begin{figure}[h!]
\begin{center}
\begin{tikzpicture}[scale=.5]
\node[draw, shape=circle] (1) at (-2,0) {A};
\node[draw, shape=circle] (2) at (2,0) {C};
\node[draw, shape=circle] (3) at (0,3.42) {B};

\draw[->, line width=2pt] (1)--(3);
\draw[->, line width=2pt] (3)--(2);
\draw[->, line width=2pt] (2)--(1);
\end{tikzpicture}
\end{center}
\caption{The dice of figure \ref{dice} as an orientation of $C_{3}$.}\label{C3}
\end{figure}

In \cite{bntd1}, Schaefer and Schweig showed that non-transitive balanced sets of $n$ $m$-sided dice exist for $n=3,4$ and all $m \geq 3$.

\begin{definition}
A directed graph $G$ is \emph{realizble} by a set of dice $D$ if there is a one-to-one map $f:D\to V(G)$ such that $P(A\succ B)>\frac{1}{2}$ implies that $(f(A),f(B))$ is a directed edge of $G$.
\end{definition}

The main results of \cite{bntd1} could then be restated as follows.

\begin{theorem}\label{C3andC4realizability}
Directed $3$- and $4$-cycles are realizable by sets of balanced dice with any number $m\geq3$ of sides.
\end{theorem}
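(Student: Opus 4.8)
The plan is to prove the statement by an explicit construction for small $m$ together with a balance-preserving increment that pushes the construction through all $m$. Realizing the directed $n$-cycle $A_1\to A_2\to\cdots\to A_n\to A_1$ (for $n=3,4$) amounts to producing $m$-sided dice $A_1,\dots,A_n$ with $P(A_i\succ A_{i+1})>\tfrac12$ for every $i$ (indices mod $n$) and with all of these probabilities equal; for the $4$-cycle one must additionally ensure that the two non-adjacent (``diagonal'') match-ups $A_1$ versus $A_3$ and $A_2$ versus $A_4$ produce no strict winner, so that the dominance digraph has no edges beyond the cycle. Writing $w(X,Y)=\#\{(x,y)\in X\times Y : x>y\}$, so that $P(X\succ Y)=w(X,Y)/m^2$, balance is the requirement that $w(A_i,A_{i+1})$ equals a common value $W$ for all $i$, and non-transitivity is the requirement $W>m^2/2$.

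First I would record explicit base cases, each verified by a direct count of winning pairs. For $n=3$ the sets $\{2,4,9\},\{1,6,8\},\{3,5,7\}$ (with $m=3$, $W=5$) and $\{2,3,9,12\},\{1,7,8,10\},\{4,5,6,11\}$ (with $m=4$, $W=9$) are balanced and non-transitive. For $n=4$ the sets $\{1,8,11\},\{3,7,10\},\{4,6,9\},\{2,5,12\}$ (with $m=3$, $W=5$) and $\{4,5,10,15\},\{3,8,9,14\},\{2,7,12,13\},\{1,6,11,16\}$ (with $m=4$, $W=9$) work; the latter comes from a cyclic Latin-square layout and has both diagonal match-ups tied at $w=8=m^2/2$.

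The engine of the proof is a doubling step, which I would isolate as a lemma: a balanced non-transitive set of $m$-sided dice on $[nm]$ yields one of $(m+2)$-sided dice on $[n(m+2)]$. To the dice I adjoin the $2n$ new (and largest) labels $nm+1,\dots,n(m+2)$, distributed in complementary-rank pairs $\{\,nm+k,\ n(m+2)+1-k\,\}$, one pair per die. Since every new label exceeds every old one, a short computation gives $w'(X,Y)=w(X,Y)+2m+N(X,Y)$, where $N(X,Y)$ counts wins between the two freshly added pairs. The key observation is that any two complementary-rank pairs tie, i.e.\ $N(X,Y)=2$ for all $X\neq Y$: of the two pairs the more extreme one carries both a smaller and a larger label than the inner pair, so it wins exactly twice. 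Hence $w'(A_i,A_{i+1})=W+2m+2$ for all $i$ at once, so balance is preserved; each diagonal count likewise increases by exactly $2m+2$, so a tie stays a tie; and non-transitivity persists because $W>m^2/2$ forces $W+2m+2>(m+2)^2/2$.

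Combining the two base cases with the lemma by induction on $m$ (stepping $3,5,7,\dots$ from the $m=3$ base and $4,6,8,\dots$ from the $m=4$ base) produces balanced non-transitive sets for every $m\ge 3$, realizing the directed $3$- and $4$-cycles. The step I expect to demand the most care is controlling the \emph{off-cycle} comparisons so that the dominance digraph is exactly the cycle. For $n=3$ this is automatic, since the reverse of each edge is the only other ordered pair. For $n=4$ it requires the two diagonal match-ups to be exact ties, which is why both the base case and the increment are chosen symmetrically to hold each diagonal count at $m^2/2$; note that an exact diagonal tie is possible only when $m^2$ is even, so the delicate point is the parity of $m$, and the odd case must be treated in the (weaker) sense that the four cyclic dominations hold and no edge other than a diagonal can appear.
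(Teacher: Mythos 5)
Your proposal is correct, but it necessarily takes a different route from the paper, because the paper contains no proof of this statement at all: Theorem \ref{C3andC4realizability} is presented as a restatement of the main results of \cite{bntd1}, and the ``proof'' is that citation. Your argument is a self-contained replacement, and I verified its ingredients: the four base cases check out (including the two diagonal ties at $8$ in your $n=4$, $m=4$ set), and the doubling lemma is sound --- any two complementary-rank pairs split their four comparisons $2$--$2$, so every pairwise win count increases by exactly $2m+2$, which preserves balance, preserves a diagonal tie ($m^{2}/2 + 2m+2 = (m+2)^{2}/2$), and preserves non-transitivity since $W > m^{2}/2$ implies $W+2m+2 > (m+2)^{2}/2$. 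Structurally, your induction is on the number of sides $m$ (in steps of two) with $n\in\{3,4\}$ fixed, whereas the analogous construction this paper does contain, Theorem \ref{Cnrealizability}, inducts on the number of dice $n$ with $m$ fixed (cloning a die and perturbing by $\epsilon$); your ``append complementary pairs on top'' device is also a clean cousin of the trick in Theorem \ref{strongtourney}, where one label above and one below are added per chord so that counts shift uniformly.

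One point where you are actually more careful than the statement you were asked to prove: under this paper's definition of realizability (the dominance relation must be a subgraph of $G$), realizing the directed $4$-cycle forces both diagonal match-ups to be exact ties, which requires $m^{2}$ even. For odd $m$ every pair of dice has a strict winner, so four dice always induce a $6$-edge tournament, and the $4$-cycle is strictly unrealizable --- the theorem as literally stated is false in that case. Your proof gives exact ties for all even $m$ and, for odd $m$, correctly retreats to the statement \cite{bntd1} actually proves (a balanced non-transitive set, with the diagonals unconstrained); the paper's realizability-language restatement glosses over this distinction.
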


Our first goal here is to generalize this statement so that the cycle may also be of any length. Then, we will generalize again from directed $C_{n}$ (cycles) to directed $K_{n}$ (complete graphs).

\ssection{Realizing cycles as dice}

The main goal in this section is to prove the following.

\begin{theorem}\label{Cnrealizability}
For any $n,m \geq 3$, there exists a non-transitive set of $n$ balanced $m$-sided dice.
\end{theorem}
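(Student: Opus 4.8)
The plan is to reduce the whole problem to a purely combinatorial edge-covering question by forcing most pairwise comparisons to be decided automatically. Partition $[nm]$ into $m$ consecutive \emph{rounds} $R_1,\dots,R_m$ with $R_k=\{(k-1)n+1,\dots,kn\}$, and build each die by taking exactly one label from each round; thus the whole configuration is specified by a bijection, for each round $k$, from the $n$ numbers of $R_k$ to the $n$ dice. The key observation is that every number in a later round exceeds every number in an earlier round, so when we compare two dice $A_i$ and $A_j$ all cross-round comparisons are decided by the round index alone and contribute exactly $\binom{m}{2}$ wins to each side. Hence $P(A_i\succ A_j)$ is governed entirely by the $m$ \emph{same-round} comparisons, and the set is balanced and non-transitive precisely when there is a constant $c>m/2$ such that, for every cyclic edge $(i,i+1)$, the number of rounds in which $A_i$'s label exceeds $A_{i+1}$'s equals $c$ (then $A_i\succ A_{i+1}$ with the common victorious count $\binom{m}{2}+c$).

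Reading each round's bijection as a ranking of the $n$ dice around the cycle $C_n$, this says that each edge of $C_n$ should be a ``descent'' in exactly $c$ of the $m$ rankings. I would first dispatch the base cases $m=3$ and $m=4$ by prescribing the \emph{ascent} sets instead: choose $m$ rankings whose ascent sets partition $E(C_n)$ into $m$ nonempty blocks, so that each edge is an ascent exactly once and hence a descent exactly $c=m-1$ times. This yields victorious probability $\bigl(\binom{m}{2}+m-1\bigr)/m^2$, namely $5/9$ when $m=3$ and $9/16$ when $m=4$, both strictly above $\tfrac12$. Such a partition exists whenever $n\ge m$, and any proper nonempty subset of $E(C_n)$ is realizable as the ascent set of some ranking (an elementary up--down construction); this covers $m=3$ for all $n\ge 3$ and $m=4$ for all $n\ge 4$. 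The single leftover pair $(n,m)=(3,4)$, together with all of $n=3,4$, is supplied directly by Theorem~\ref{C3andC4realizability}.

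Finally I would promote the base cases to all larger $m$ by an induction that raises $m$ by $2$ while preserving both balance and strictness. Given a balanced non-transitive set on $[nm]$ with common count $W_0>m^2/2$, shift every label up by $n$ and adjoin a new bottom round (the values $1,\dots,n$) and a new top round (the values $nm+n+1,\dots,nm+2n$). A short bookkeeping of the nine comparison types shows that on edge $(i,i+1)$ the win count of $A_i$ over $A_{i+1}$ increases by $2m+1+\bigl([\,b_i>b_{i+1}\,]+[\,T_i>T_{i+1}\,]\bigr)$, where $b_i,T_i$ are the new bottom and top labels on $A_i$ and $[\,\cdot\,]$ is a $0/1$ indicator. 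Assigning the top labels in increasing order and the bottom labels in decreasing order around the cycle makes the parenthesized quantity identically $1$, so every edge gains exactly $2m+2$; balance is preserved, and since $W_0>m^2/2$ the new count $W_0+2m+2$ exceeds $\tfrac12(m+2)^2$, so strictness persists. Stepping by $2$ from $m=3$ and $m=4$ then reaches every $m\ge 3$.

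The hard part is arranging balance and strict non-transitivity \emph{simultaneously}: the round reduction isolates this into the single constant-descent-count condition, which is why the bases must be kept small (so that the number of rounds never exceeds $n$ and the required partition of $E(C_n)$ exists) and why the lone $(3,4)$ case must be imported from prior work. The other delicate point is the inductive step, where naively adjoining a single round breaks balance because a cyclic sequence cannot be monotone; only the matched top/bottom pair of rounds, with reversed monotonicities sharing a common wrap-around edge, produces a per-edge increment that is constant across all of $C_n$.
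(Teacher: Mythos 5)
Your proof is correct, but it proceeds along a genuinely different route from the paper's. The paper inducts on the number of dice $n$ (for fixed $m$), importing the base case $n=3$ from Theorem \ref{C3andC4realizability}: it clones the last die $A_k$ into a new die $A_{k+1}$ whose entries are those of $A_k$ perturbed by $\pm\epsilon$, so that $P(A_{k+1}\succ A_1)$ automatically equals the original victorious probability while the number of victories of $A_k$ over $A_{k+1}$ can be tuned (each entry raised instead of lowered removes exactly one victory) to match it; a final linear relabeling returns the labels to $[(k+1)m]$. You instead fix $n$ and induct on the number of sides $m$, after a structural reduction that does the real work: your round decomposition cancels all cross-round comparisons at $\binom{m}{2}$ wins apiece, converting balance-plus-non-transitivity into the purely combinatorial demand that each cyclic edge be a descent in the same number $c>m/2$ of the $m$ rankings. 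Your base cases (ascent sets partitioning $E(C_n)$, available exactly when $n\geq m$, hence $m=3$ for all $n\geq 3$ and $m=4$ for $n\geq 4$, with $(n,m)=(3,4)$ imported from Theorem \ref{C3andC4realizability}) and your $m\to m+2$ step (matched top and bottom rounds with opposite monotonicity around the cycle) both check out: the nine-case bookkeeping does give the increment $2m+1+[\,b_i>b_{i+1}\,]+[\,T_i>T_{i+1}\,]$, reversed monotonicities do make this equal to $2m+2$ on every edge including the wrap-around, and $W_0>m^2/2$ does yield $W_0+2m+2>\tfrac{1}{2}(m+2)^2$; the ``up--down'' lemma you leave unproved (every nonempty proper subset of $E(C_n)$ is the ascent set of some cyclic ranking) is indeed elementary (place local minima lowest, local maxima highest, and fill each run monotonically in between). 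Comparing the two: the paper's argument is shorter and uniform in $m$, at the cost of an informal perturb-and-relabel detour outside the integers; yours stays inside the integers, is completely explicit about the resulting victorious probabilities, identifies exactly which pair $(n,m)$ resists the direct construction, and---notably---your top/bottom-round device is essentially the same trick the paper uses later in proving Theorem \ref{strongtourney} (one new label above and one below, so that existing edges are never disturbed), so your inductive step anticipates the tournament construction.
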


An example to illustrate our procedure will be useful.

\begin{example}

We start with a set of balanced non-transitive dice, and would like to add another one.

$\begin{array}{cccc}A: & 9 & 5 & 1 \\B: & 8 & 4 & 3 \\C: & 7 & 6 & 2\end{array} \mapsto \begin{array}{cccc}A: & \hat{9} & \hat{5} & \hat{1} \\B: & \hat{8} & \hat{4} & \hat{3} \\C: & \hat{7} & \hat{6} & \hat{2}\\ D: & ? & ? & ?\end{array}$

We require that $C\succ D$ and $D\succ A$. But We already have $C\succ A$, and so these three dice are totally ordered: $C>D>A$. As such, we can move outside of $\mathbb{N}$ by copying $C$ to $D$ and lowering all values by, say, $1/10$.

$\begin{array}{cccc}A: & 9 & 5 & 1 \\B: & 8 & 4 & 3 \\C: & 7 & 6 & 2 \\D: & 6.9 & 5.9 & 1.9\end{array}$

This new set of dice does indeed have $C\succ D\succ A$, and so we are done if we only seek non-transitivity. However, the original set was balanced, and this will likely not be (we have $P(C\succ D)=\binom{m+1}{2}/m^{2}>P(D\succ A)$). So, if we count the number of ``victories'' of our original set (the numerator of our probability), we can raise values on $D$ by $1/10$ instead of lowering them to lower the number of victories of $C$ over $D$. We can alter this number by any amount we desire, from $1$ to $\binom{m}{2}$ (by raising every value on $D$), and so can match the desired victorious probability. The last step is to return to $\mathbb{N}$ by relabeling linearly.

$\begin{array}{cccc}A: & 9 & 5 & 1 \\B: & 8 & 4 & 3 \\C: & 7 & 6 & 2 \\D: & 6.9 & 5.9 & 1.9\end{array} \mapsto \begin{array}{cccc}A: & 9 & 5 & 1 \\B: & 8 & 4 & 3 \\C: & 7 & 6 & 2 \\D: & 6.9 & 5.9 & \textbf{2.1} \end{array} \mapsto \begin{array}{cccc}A: & 12 & 6 & 1 \\B: & 11 & 5 & 4 \\C: & 10 & 8 & 2 \\D: & 9 & 7 & 3\end{array}.$
\end{example}

This procedure is general.

\begin{proof}[Proof of Theorem \ref{Cnrealizability}]
We proceed by induction. Our base case, with three dice ($n=3$), is done (for arbitrary number of sides $m\geq3$, see \cite{bntd1}). So assume we have a set of $k$ balanced non-transitive $m$-sided dice, $A_{1},\dots,A_{k}$. Create a new die $A_{k+1}$ whose entries are those of $A_{k}$, each shifted down by some $\epsilon<1$. The set of dice $A_{1},\dots,A_{k+1}$ could be relabeled linearly from $[(k+1)m]$, which would complete the proof if the condition of balance were omitted from the theorem (we could then also omit it from the proof). But, by shifting the entries of $A_{k+1}$ up by $\epsilon$ rather than down, we alter $P(A_{k}\succ A_{k+1})$ while keeping $P(A_{k+1}\succ A_{1})$ the same (the victorious probability we started with). This recovers the condition of balance.
\end{proof}

\ssection{Tournaments}

A \emph{tournament} is a directed complete graph. Two vertices $x,y$ in a directed graph are \emph{strongly connected} if there is a directed path from $x$ to $y$ and also one from $y$ to $x$. Under this equivalence relation, the vertices of a directed graph are sorted into \emph{strongly connected components} (or \emph{strong components}). A \emph{strongly connected directed graph} is one with only one strong component. We know from Moon \cite{moon} that a tournament is strong if and only if it contains a directed cycle of every length. He also shows, in particular, a tournament is strong if and only if it contains a directed Hamilton cycle.

Given a directed graph, we may form a new directed graph from it by contracting each connected component down to a single vertex. The result will likely have parallel edges, but all edges between any two vertices point in the same direction; delete all but one of each parallel edge group. The result, called the \emph{condensation}, is always acyclic. For a strong directed graph, the condensation is a single vertex.

Because each vertex in the condensation of a directed graph contains a directed cycle on all its vertices of the original directed graph, we can give a set of non-transitive dice (one for each vertex of the condensation) that realizes the cycle. The question, then, is about any edges between vertices not adjacent (with an edge in either direction) in the cycle. Namely, can we choose or manipulate our dice to obey these edges as well? We will answer this question by constructing dice that realize any tournament.

If the tournament is not strong, the condition of balance will be impossible. However, strong tournaments can be realized by balanced dice, and so we can create a set of balanced dice for each vertex in the condensation (strong component), and then shift the labels to obey the total order given by the condensation. The problem of realizing tournaments then reduces to the problem of strong tournaments, which we will use to our advantage.

Given a strong tournament, locate a directed Hamilton cycle as a subgraph. This cycle alone can be realized by balanced non-transitive dice by Theorem \ref{Cnrealizability}. We will then augment our dice to account for the other edges.

Again, an example will be helpful.

\begin{example}
Start with a directed 5-cycle, and a set of $5$ balanced non-transitive dice with $3$ sides (constructed by repeating the procedure of the previous example).

\begin{center}
\begin{multicols}{3}
\begin{tikzpicture}[scale=.4]
\node[draw, shape=circle] (1) at (90:4) {A};
\node[draw, shape=circle] (2) at (18:4) {B};
\node[draw, shape=circle] (3) at (306:4) {C};
\node[draw, shape=circle] (4) at (234:4) {D};
\node[draw, shape=circle] (5) at (162:4) {E};

\draw[->, line width=2pt] (1)--(2);
\draw[->, line width=2pt] (2)--(3);
\draw[->, line width=2pt] (3)--(4);
\draw[->, line width=2pt] (4)--(5);
\draw[->, line width=2pt] (5)--(1);
\end{tikzpicture}
\columnbreak

\columnbreak
$\begin{array}{cccc} \\ A: & 15 & 7 & 1 \\ B: & 14 & 6 & 5 \\ C: & 13 & 10 & 2 \\ D: & 12 & 9 & 3 \\ E: & 11 & 8 & 4 \end{array}$
\end{multicols}
\end{center}

For every edge we add, we will need to add sides to our dice: one above and one below, for a total of $2\left(\binom{n}{2}-n\right)=n^{2}-3n$ extra entries. Because half of them are below, we shift all our labels up by $\left(\binom{n}{2}-n\right)=\frac{n^{2}-3n}{2}$.

\begin{center}
\begin{multicols}{3}
\begin{tikzpicture}[scale=.4]
\node[draw, shape=circle] (1) at (90:4) {A};
\node[draw, shape=circle] (2) at (18:4) {B};
\node[draw, shape=circle] (3) at (306:4) {C};
\node[draw, shape=circle] (4) at (234:4) {D};
\node[draw, shape=circle] (5) at (162:4) {E};

\draw[->, line width=2pt] (1)--(2);
\draw[->, line width=2pt] (2)--(3);
\draw[->, line width=2pt] (3)--(4);
\draw[->, line width=2pt] (4)--(5);
\draw[->, line width=2pt] (5)--(1);
\end{tikzpicture}
\columnbreak

\columnbreak
$\begin{array}{cccc} \\ A: & 25 & 17 & 11 \\ B: & 24 & 16 & 15 \\ C: & 23 & 20 & 12 \\ D: & 22 & 19 & 13 \\ E: & 21 & 18 & 14 \end{array}$
\end{multicols}
\end{center}

We will add the missing edges (which can be done in any order) by choosing the two numbers above and the two numbers below our existing labels. Count the number of victories that existed to begin with on the missing edge. For three-sided non-transitive dice, it will be either $4$ or $5$. If the die we want to be victorious had $5$, it gets the smaller number of the two below (it doesn't need one more). Otherwise it gets the larger. The other die gets the opposite. Of the two numbers above, the larger goes on the die we want to be victorious (the smaller on the other). So to add the edge $(A,C)$, $A$ will get the larger of $\{9,10\}$ (as currently $P(A\succ C)=\frac{4}{9}$), and it also gets the larger of $\{26,27\}$ (so that $A$ will beat $C$).

\begin{center}
\begin{multicols}{3}
\begin{tikzpicture}[scale=.4]
\node[draw, shape=circle] (1) at (90:4) {A};
\node[draw, shape=circle] (2) at (18:4) {B};
\node[draw, shape=circle] (3) at (306:4) {C};
\node[draw, shape=circle] (4) at (234:4) {D};
\node[draw, shape=circle] (5) at (162:4) {E};

\draw[->, line width=2pt] (1)--(2);
\draw[->, line width=2pt] (2)--(3);
\draw[->, line width=2pt] (3)--(4);
\draw[->, line width=2pt] (4)--(5);
\draw[->, line width=2pt] (5)--(1);
\draw[->, line width=2pt] (1)--(3);
\end{tikzpicture}
\columnbreak

\columnbreak
$\begin{array}{cc|ccc|c}A: & 27 & 25 & 17 & 11 & 10\\ B: & & 24 & 16 & 15 & \\ C: & 26 & 23 & 20 & 12 & 9 \\ D: & & 22 & 19 & 13 & \\ E: & & 21 & 18 & 14 & \end{array}$
\end{multicols}
\end{center}

Now add, say, $(B,D)$. That means $B$ gets $29$ and $D$ gets $28$. In the original, $P(B\succ D)=\frac{5}{9}$, so $B$ gets $7$ and $D$ gets $8$.

\begin{center}
\begin{multicols}{3}
\begin{tikzpicture}[scale=.4]
\node[draw, shape=circle] (1) at (90:4) {A};
\node[draw, shape=circle] (2) at (18:4) {B};
\node[draw, shape=circle] (3) at (306:4) {C};
\node[draw, shape=circle] (4) at (234:4) {D};
\node[draw, shape=circle] (5) at (162:4) {E};

\draw[->, line width=2pt] (1)--(2);
\draw[->, line width=2pt] (2)--(3);
\draw[->, line width=2pt] (3)--(4);
\draw[->, line width=2pt] (4)--(5);
\draw[->, line width=2pt] (5)--(1);
\draw[->, line width=2pt] (1)--(3);
\draw[->, line width=2pt] (2)--(4);
\end{tikzpicture}
\columnbreak

\columnbreak
$\begin{array}{cc|ccc|c}A: & 27 & 25 & 17 & 11 & 10 \\ B: & 29 & 24 & 16 & 15 & 7 \\ C: & 26 & 23 & 20 & 12 & 9 \\ D: & 28 & 22 & 19 & 13 & 8 \\ E: & & 21 & 18 & 14 & \end{array}$
\end{multicols}
\end{center}

Note that $(A,B)$ (and all others) remain correct! When we add $(B,D)$, a value larger than any yet on $A$ appears on $B$, but so does a value smaller than any yet on $A$, so the net change on the number of victories of $A$ over $B$ is zero.

We proceed in this fashion. The order the edges were added here was: $(B,E)$, $(C,E)$, $(A,D)$. This gives the set of dice below. If edges were added in a different order, a different set of dice, also with the features of this one, would be produced.

\begin{center}
\begin{multicols}{3}
\begin{tikzpicture}[scale=.4]
\node[draw, shape=circle] (1) at (90:4) {A};
\node[draw, shape=circle] (2) at (18:4) {B};
\node[draw, shape=circle] (3) at (306:4) {C};
\node[draw, shape=circle] (4) at (234:4) {D};
\node[draw, shape=circle] (5) at (162:4) {E};

\draw[->, line width=2pt] (1)--(2);
\draw[->, line width=2pt] (2)--(3);
\draw[->, line width=2pt] (3)--(4);
\draw[->, line width=2pt] (4)--(5);
\draw[->, line width=2pt] (5)--(1);
\draw[->, line width=2pt] (1)--(3);
\draw[->, line width=2pt] (2)--(4);
\draw[->, line width=2pt] (2)--(5);
\draw[->, line width=2pt] (3)--(5);
\draw[->, line width=2pt] (1)--(4);
\end{tikzpicture}
\columnbreak

\columnbreak
$\begin{array}{ccc|ccc|cc}A: & 35 & 27 & 25 & 17 & 11 & 10 & 2 \\ B: & 31 & 29 & 24 & 16 & 15 & 7 & 5 \\ C: & 33 & 26 & 23 & 20 & 12 & 9 & 3 \\ D: & 34 & 28 & 22 & 19 & 13 & 8 & 1 \\ E: & 32 & 30 & 21 & 18 & 14 & 6 & 4 \end{array}$
\end{multicols}
\end{center}

\end{example}

\begin{theorem}\label{strongtourney}
Let $G$ be a strong tournament. There is a set of balanced non-transitive dice realizing $G$.
\end{theorem}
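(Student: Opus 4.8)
The plan is to start from the directed Hamilton cycle that Moon's theorem \cite{moon} guarantees inside any strong tournament, realize that cycle by balanced dice using Theorem \ref{Cnrealizability}, and then install the remaining chords one at a time without disturbing anything already set. So I first fix a directed Hamilton cycle $A_1 \to A_2 \to \cdots \to A_n \to A_1$ in $G$ and, by Theorem \ref{Cnrealizability}, realize it with a balanced non-transitive set of $n$ three-sided dice. The feature of the three-sided starting set that I would isolate at the outset is that every pair of its dice splits their nine head-to-head outcomes $5$--$4$: the labels are distinct, so there are no ties, and nine being odd forces the split to be this even. Equivalently, every pair has victory margin exactly $\pm 1$. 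I would check that this property survives the copy-and-shift step used to prove Theorem \ref{Cnrealizability}, since it is what lets a single adjustment per chord suffice.

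I would then make room for new labels and process the chords. Each chord contributes, to each of its two endpoints, one label placed above all current labels and one placed below all current labels; with $\binom{n}{2}-n$ chords this creates $2\left(\binom{n}{2}-n\right)=n^2-3n$ new low labels, so I shift every existing label up by $n^2-3n$ before starting. To install a chord whose head is to be $X$ and whose tail is to be $Y$, I take two consecutive high values and two consecutive low values, award the larger high value to $X$ and the smaller to $Y$, and split the two low values between $X$ and $Y$ according to whether $X$ currently leads $Y$ by one victory or trails by one---exactly the rule in the worked example. A short count then shows that after this step $X$ beats $Y$ by a margin of precisely one victory, regardless of which of the two cases occurred.

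The engine of the proof, and the step I expect to be the crux, is a neutrality lemma: attaching one label above all current labels and one below all current labels to a die $Z$, while leaving another die $W$ untouched, raises both the number of victories of $Z$ over $W$ and the number of victories of $W$ over $Z$ by $|W|$, and so leaves the margin of the pair $\{Z,W\}$ unchanged. Applying this to each endpoint of the chord being installed, against every other die, shows that all previously decided edges keep their margins: no earlier chord is disturbed, and every cycle edge retains the margin it had in the balanced starting set. The same lemma guarantees that every not-yet-installed chord still has margin $\pm 1$ when its turn comes, so the one-step fixing rule always applies. I would prove this lemma carefully, as the entire induction rests on the claim that these extreme additions are invisible to all pairs other than the one being adjusted.

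Finally I would assemble the conclusion. After all $\binom{n}{2}-n$ chords are installed, every die has $2n-3$ sides, all $n(2n-3)$ labels are distinct, and every directed edge of $G$ is won by its head by a margin of exactly one victory. Uniform die size together with a common margin forces the common winning probability $\frac{(2n-3)^2+1}{2(2n-3)^2}>\tfrac12$ on every edge, so the set realizes $G$; in particular the $n$ cycle edges share this one value, which is exactly balance, and each exceeds $\tfrac12$, giving non-transitivity around the cycle. Relabeling the distinct values order-isomorphically onto $[n(2n-3)]$ returns to an honest set of dice. The delicate point throughout is the interplay between the direction-forcing step, which must flip a margin from $-1$ to $+1$ when $X$ currently trails, and the neutrality lemma, which must certify that this flip costs nothing on any other edge; reconciling these two is the heart of the construction.
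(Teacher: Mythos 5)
Your proposal follows the paper's proof essentially step for step: locate a directed Hamilton cycle via Moon's theorem, realize it by balanced three-sided dice via Theorem \ref{Cnrealizability}, shift all labels upward to make room, and install each chord with a pair of extreme-high labels (forcing the direction) and a pair of extreme-low labels (split according to the current $\pm 1$ margin), with the neutrality observation---adding one label above everything and one below everything to a die raises both head-to-head counts against any untouched die by the same amount---guaranteeing that no previously settled edge is disturbed. In fact your bookkeeping is more careful than the paper's prose: the upward shift must be $n^{2}-3n$ (as in the paper's worked example), not the $\frac{n^{2}-3n}{2}$ stated in the paper's proof, and the final labels fill $[n(2n-3)]$, not $[n^{2}]$.

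One justification in your write-up is wrong, however, and it sits under the load-bearing claim. You assert that every pair of dice in the starting set splits its nine outcomes $5$--$4$ because ``nine being odd forces'' it; parity only rules out a tie and is perfectly consistent with $6$--$3$, $7$--$2$, $8$--$1$, or $9$--$0$ splits. The $5$--$4$ property for \emph{all} pairs, including pairs not adjacent on the cycle, is not a consequence of balance or parity: it is a feature of the specific construction behind Theorem \ref{Cnrealizability}. In the three-dice base case every pair is a cycle edge, so balance forces $5$--$4$; and when a new die is created as an $\epsilon$-perturbed copy of the previous one, its victory count against every older die equals that of the die it copies (the labels of distinct older dice are distinct integers, so an $\epsilon$-shift cannot change any comparison), whence all pairs remain at $5$--$4$ by induction. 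This is exactly the ``check'' you defer in your second paragraph, and it is not an optional sanity check: your own margin computation shows that installing a chord changes its margin by $0$ or $2$, so a chord that started at margin $-3$ could reach only $-3$ or $-1$, never $+1$, and the whole construction would stall. With that verification written out in place of the parity remark, your argument is the paper's argument.
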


\begin{proof}
Let $G$ be a strong tournament on $n\geq3$ vertices; we construct a set of dice which will have $3+2(n-3)=2n-3$ sides (to make the condition of balance easy to recover). Because $G$ is strong, it contains a directed Hamilton cycle. By Theorem \ref{Cnrealizability}, there is a set $\{A_{1},\dots,A_{n}\}$ of balanced non-transitive dice realizing this subgraph of $G$ with $3$ sides. The only possible victorious probability of such a set of dice is $\frac{5}{9}$. Further, the number of victories of \emph{any} die over any other (i.e. those not adjacent in the cycle) is either $4$ or $5$. We will add sides to our dice to account for the edges other than those in the cycle.\\

Add $\frac{n^{2}-3n}{2}$ (an integer) to all the labels for the $A_{i}$, which will now be labeled by $\{\frac{n^{2}-3n}{2},\frac{n^{2}-3n}{2}+1,\dots,3n+\frac{n^{2}-3n}{2}\}$. Choose a directed edge $(A_{i},A_{j})$ other than one in the cycle, and assume (without loss) $A_{i}\succ A_{j}$. Calculate $P(A_{i}\succ A_{j})$ as it stands in the original set, which is either $\frac{4}{9}$ or $\frac{5}{9}$. Append the label $3n+\frac{n^{2}-3n}{2}+2$ to $A_{i}$ and $3n+\frac{n^{2}-3n}{2}+1$ to $A_{j}$. Append the smaller two labels as follows:

\begin{tabular}{c|c} $A_{i}$ gets $3n+\frac{n^{2}-3n}{2}-2$, $A_{j}$ gets $3n+\frac{n^{2}-3n}{2}-1$ & if $P(A_{i}\succ A_{j})=\frac{5}{9}$, \\ the opposite & If $P(A_{i}\succ A_{j})=\frac{4}{9}$. \end{tabular}\\

The placement of the larger number on $A_{i}$ will add $5$ additional victories, and the placement of \emph{a} smaller number on $A_{j}$ will add $3$ victories to $A_{i}$ (one for each original element of $A_{i}$ and either $0$ or $1$ more based on what $P(A_{i}\succ A_{j})$ was to begin with. The new victorious probability is $\frac{13}{25}$.\\

It is clear that the repetition of this process allows the new set of dice to obey the edges outside the cycle. We also remark that we do not negatively impact the edges in the cycle: if, at the $j^{\text{th}}$ stage we append two numbers to $A_{i}$, and in step $k>j$ we append two numbers to $A_{i+1}$, $A_{i+1}$ gains $k$ extra victories over $A_{i}$ from the large label, but $k$ extra losses from the small label.\\

We can iterate through all the ``chords'' of the Hamilton cycle (in any order), and we finish with a set of dice labeled by $[3n+2\left(\binom{n}{2}-n\right)]=[n^{2}]$ which is balanced, non-transitive, and obeys all arrows in the tournament.
\end{proof}

\ssection{Further Questions}

If a directed graph is acyclic, it is trivially realizable even by one sided dice (acyclic graphs correspond to total orderings). By this and previous observations, along with Theorem \ref{strongtourney}, we have the following, which puts everything together nicely.

\begin{theorem}\label{realizability}
Let $G$ be a directed graph. There is a set of dice realizing $G$. Moreover, the dice may be made balanced if and only if $G$ is a subgraph of a strong tournament.
\end{theorem}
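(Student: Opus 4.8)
The plan is to treat the two assertions separately and to build everything out of the condensation together with the two realizability theorems already in hand. For the first assertion I would reduce to the strong components of $G$. Because the underlying edge set consists of unordered pairs, $G$ has no pair of oppositely directed edges, so every strong component has either one vertex or at least three, and a non-trivial component is a strongly connected subdigraph. I would complete each such component to a tournament on the same vertex set. Adding edges cannot destroy strong connectivity, so the completion is a strong tournament, and Theorem \ref{strongtourney} realizes it by a balanced non-transitive set of dice; a single-vertex component is realized by a one-sided die.

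Next I would glue these component dice using the condensation, which is acyclic. Fix a topological order $S_1,\dots,S_k$ of the components, so that every edge of $G$ between two distinct components runs from an earlier to a later component. Shifting the blocks of labels so that every face appearing on a die of $S_i$ exceeds every face appearing on a die of $S_j$ whenever $i<j$ makes each die of an earlier component beat each die of a later component. This realizes precisely the between-component edges of $G$---there are no backward ones---while leaving every within-component comparison unchanged, and so produces a set of dice realizing $G$.

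For the ``moreover'' clause I would prove both implications. If $G$ is a subgraph of a strong tournament $T$ on $V(G)$, then Theorem \ref{strongtourney} yields a balanced non-transitive set whose victories are exactly the edges of $T$; since $E(G)\subseteq E(T)$, the same dice realize $G$ and are balanced. Conversely, suppose a balanced set realizes $G$. Balance supplies a cyclic order $A_1,\dots,A_n$ of \emph{all} the dice with $P(A_i\succ A_{i+1})>\frac{1}{2}$ for every $i$ (indices modulo $n$); this is a directed Hamilton cycle inside the digraph $H$ of victories (an edge from $A$ to $B$ whenever $A\succ B$), so $H$ is strongly connected. Orienting the remaining tied pairs arbitrarily completes $H$ to a tournament that still contains the Hamilton cycle, hence to a strong tournament $T$ on $V(G)$ with $G\subseteq H\subseteq T$.

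The step I expect to be the main obstacle is the gluing: one must check that the single block-shift simultaneously creates every desired between-component victory, creates no victory pointing the wrong way across the condensation, and disturbs neither the within-component comparisons nor the balance of any component. This bookkeeping is routine once the blocks are ordered from source to sink, but it is the one place where the three ingredients must be seen to interact cleanly. I would also make explicit why balance forces a single strong component---namely that its defining cyclic order is a directed Hamilton cycle of victories---since this is exactly the fact that rules out balance for any $G$ that is not a subgraph of a strong tournament.
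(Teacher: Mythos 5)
Your proposal is correct and takes essentially the same route as the paper: decompose $G$ by its (acyclic) condensation, realize each strong component via Theorem \ref{strongtourney} after completing it to a strong tournament, shift the blocks of labels along a topological order to handle between-component edges, and settle the ``moreover'' clause by completing the victory digraph of a balanced set to a strong tournament through its Hamilton cycle of victories. You actually supply more detail than the paper, which asserts the ``only if'' direction without proof; the only caveat in your version is that balance by itself gives equal consecutive probabilities, not necessarily exceeding $\frac{1}{2}$, so one should first reverse the cyclic order (if the common probability is below $\frac{1}{2}$) or orient tied consecutive pairs along the cycle (if it equals $\frac{1}{2}$) before concluding that the completed tournament contains a directed Hamilton cycle.
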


The condition on balance suggests the following.

\begin{definition}
A directed graph is \emph{strongly connectable} if the missing edges can be added and directed in a way such that the resulting tournament is strong.
\end{definition}

\begin{question}
Is there a necessary and sufficient condition for strong connectability?
\end{question}

This question was answered (in this form) by Joyce, Schaefer, West, and Zaslavsky, who showed in \cite{JSWZ} that the obvious necessary condition on $G$ of containing no complete directed cut (a complete collection of edges between a partition of the vertices into two nonempty sets, all pointing one way) is also sufficient.\\

The method of proof of Theorem \ref{strongtourney} suggests:

\begin{question}
What is the minimum number of sides required for a set of $n$ dice to realize a strong tournament? Any tournament?
\end{question}

A natural generalization is to the realm of weighted directed graphs.

\begin{question}
Let $G$ be a weighted directed graph. Can $G$ be realized by a set of dice such that the probability of one die beating another depends (in some way) on the weight of the edge between the corresponding vertices?
\end{question}


\begin{thebibliography}{1}

\bibitem{FFF}
Edward~J. Barbeau.
\newblock {\em Mathematical {F}allacies, {F}laws, and {F}limflam}.
\newblock The Mathematical Association of America, 2000.

\bibitem{MGntD}
M.~Gardner.
\newblock The paradox of the nontransitive dice and the elusive principle of
  indifference.
\newblock {\em Scientific American}, 223:110--114, 1970.

\bibitem{MGnt}
M.~Gardner.
\newblock On the paradoxical situations that arise from nontransitive
  relations.
\newblock {\em Scientific American}, 231:120--125, 1974.

\bibitem{JSWZ}
S.~Joyce, A.~Schaefer, D.~West, and T.~Zaslavsky.
\newblock Strongly connectable digraphs and non-transitive dice.
\newblock In submission.

\bibitem{moon} John W.~Moon,
\newblock {\em Topics on Tournaments.}
\newblock Holt, Rinehart and Winston, New York, 1968.

\bibitem{ntD1}
Richard P.~Savage Jr.
\newblock The paradox of nontransitive dice.
\newblock {\em The American Mathematical Monthly}, 101(5):429--436, May 1994.

\bibitem{bntd1}
A.~Schaefer and J.~Schweig.
\newblock Balanced Non-Transitive Dice.
\newblock {\em College Math. J.}, 48 (2017), no. 1, 10--16.

\end{thebibliography}
\end{document}